\documentclass[10pt]{amsart}

\usepackage[T1]{fontenc}
\usepackage[english]{babel}

\usepackage[a4paper,margin=3cm]{geometry}
\allowdisplaybreaks
\usepackage{amssymb}

\usepackage[
hyperfootnotes=false
]{hyperref}

\hypersetup{
    colorlinks=true,
    linkcolor=blue,
    citecolor=blue,
    urlcolor=red
}

\theoremstyle{plain}
\newtheorem{lemma}{Lemma}[section]
\newtheorem{theorem}[lemma]{Theorem}
\newtheorem{proposition}[lemma]{Proposition}

\theoremstyle{definition}

\theoremstyle{remark}

\numberwithin{equation}{section}

\newcommand{\R}{\mathbb R}
\newcommand{\C}{\mathbb C}

\newcommand{\supp}{\operatorname{supp}}

\newcommand{\eps}{\varepsilon}

\begin{document}

    \title{Strong Unique Continuation for Fractional Schr\"odinger Operators}

    \begin{abstract}
        We prove the strong unique continuation property for the fractional Schr\"odinger equation
\[
    (-\Delta)^s u=Vu
\]
with scaling-critical potentials
   $ V\in L^{\frac{n}{2s}}_{\mathrm{loc}}$ for all $0<s<1$.
Our result constitutes the nonlocal counterpart to the classical Jerison--Kenig result for Schr\"odinger operators.
    \end{abstract}

    \keywords{Fractional Laplacian, fractional Schr\"odinger operators, strong unique continuation, Carleman estimates, Riesz potentials, critical potentials}

    \subjclass[2020]{35R11, 35B60, 35J10, 47G10}

    \author[H.~Prasad]{Harsh Prasad}
    \address[H.~Prasad]{Fakult\"at f\"ur Mathematik, Universit\"at Bielefeld, 33615 Bielefeld, Germany.}
    \email[]{hprasad@math.uni-bielefeld.de}

    \maketitle
\setcounter{tocdepth}{1}
\tableofcontents
\section{Introduction}

Unique continuation is a fundamental rigidity principle for elliptic
equations. In its strong form, it asserts that a nontrivial solution cannot
vanish to infinite order at a point. For the Schr\"odinger equation
\begin{equation}\label{eq:intro-local}
-\Delta u=Vu,
\end{equation}
the natural potential space is dictated by scaling. Under the rescaling
\(u_\lambda(x)=u(\lambda x)\), the potential transforms as
\[
    V_\lambda(x)=\lambda^2V(\lambda x),
\]

and the corresponding scale-invariant Lebesgue space is \(L^{n/2}\). The
landmark theorem of Jerison and Kenig \cite{JerisonKenig1985}, established strong unique continuation at
this critical scale. The purpose of the present paper is to prove the
corresponding nonlocal theorem for the fractional Laplacian.

Consider the fractional Schr\"odinger equation
\begin{equation}\label{eq:intro-frac}
(-\Delta)^s u=Vu,
\qquad 0<s<1.
\end{equation}
The potential now scales according to
\[
    V_\lambda(x)=\lambda^{2s}V(\lambda x),
\]

so the scale-invariant Lebesgue space is
\begin{equation}\label{eq:critical-space-intro}
L^{\frac{n}{2s}}.
\end{equation}
The precise nonlocal analogue of the Jerison--Kenig problem is therefore
whether \eqref{eq:intro-frac} has the strong unique continuation property for
an arbitrary potential in \eqref{eq:critical-space-intro}.

Unique continuation for fractional equations has been developed principally
through the extension theorem of Caffarelli and Silvestre
\cite{CaffarelliSilvestre2007}. Fall and Felli \cite{FallFelli2014} used
frequency-function and blow-up methods to obtain local asymptotics and strong
unique continuation for equations with Hardy-type potentials. R\"uland
\cite{Ruland2015} developed extension-based Carleman estimates for rough and
scaling-critical potentials, while Yu \cite{Yu2017} treated fractional
powers of variable-coefficient elliptic operators. Higher-order fractional
equations were subsequently considered in
\cite{FelliFerrero2020,GarciaFerreroRuland2019}.

The strong unique continuation results obtained by these methods impose
structural, pointwise, or regularity assumptions on the potential. For
example, R\"uland's principal result treats potentials of the form

$$
    V(x)
    =
    |x|^{-2s}h\!\left(\frac{x}{|x|}\right)+V_2(x),
    \qquad
    |V_2(x)|\lesssim |x|^{-2s+\varepsilon},
$$

with additional hypotheses in the low-order regime; see
\cite[Proposition~1.1]{Ruland2015}. In particular, for arbitrary rough
potentials without differentiability, the extension-Carleman argument is
restricted to \(s\ge\frac14\).

A different, direct approach was developed by Seo
\cite{Seo2014MathNachr,Seo2015Proc,Seo2015Taiwan}. In
\cite{Seo2015Proc}, Seo proved weak unique continuation at the critical
Lorentz scale
\[
    V\in L^{n/(2s),\infty}_{\mathrm{loc}},
\]

under a local smallness condition. More recently, the author
\cite{Prasad2026} obtained global weak unique continuation for substantially
rougher potentials in \(H^{-s}_{\mathrm{loc}}\).  These results show that weak unique
continuation persists well beyond the scale-critical Lebesgue space. On the other hand, Grube
\cite{Grube2026} recently proved a conditional strong-type unique
continuation principle. His
result assumes additional regularity \cite[Remark~1.2 (i)]{Grube2026} and a moment condition stronger than
flatness \cite[Remark~1.4]{Grube2026}.

Therefore, despite substantial progress over the past decade, strong unique continuation for scaling critical potentials has remained open. Indeed, as our discussion above illustrates, even the subcritical case of bounded, measurable potentials previously required restrictions on $s$. We now state our main

\begin{theorem}\label{thm:intro-main}
Let \(n\ge3\), \(0<s<1\), and let \(\Omega\subset\R^n\) be open. Suppose

$$
    V\in L^{\frac{n}{2s}}_{\mathrm{loc}}(\Omega;\C),
    \qquad
    u\in H^s(\R^n;\C),
$$

and

$$
    (-\Delta)^s u=Vu
    \qquad\text{in }\Omega
$$

in the sense of distributions. If, for some \(x_0\in\Omega\),
\begin{equation}\label{eq:flatness}
\int_{B_r(x_0)}|u|^2,dx=O(r^N)
\qquad\text{as }r\downarrow0
\end{equation}
for every \(N>0\), then \(u\equiv0\) in \(\R^n\).
\end{theorem}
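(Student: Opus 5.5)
The plan is to follow the direct (non-extension) route of Jerison--Kenig, adapted to the nonlocal operator. The core tool will be an $L^p$--$L^{p'}$ Carleman inequality for $(-\Delta)^s$ with singular weights $|x|^{-\tau}$, where
\[
\frac1p-\frac1{p'}=\frac{2s}{n},\qquad p=\frac{2n}{n+2s},\quad p'=\frac{2n}{n-2s}.
\]
We will prove
\begin{equation*}
\bigl\||x|^{-\tau}w\bigr\|_{L^{p'}(\R^n)}\le C\,\bigl\||x|^{-\tau}(-\Delta)^s w\bigr\|_{L^{p}(\R^n)},
\end{equation*}
for $w\in C_c^\infty(\R^n\setminus\{0\})$. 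The constant $C$ must be independent of $\tau$ along a sequence $\tau=\tau_k\to\infty$ that stays a fixed distance from a discrete exceptional set.

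To prove the inequality we conjugate, writing $w=|x|^{\tau}v$. The operator $|x|^{-\tau}(-\Delta)^s|x|^{\tau}$ is homogeneous of degree $2s$. In polar coordinates $x=e^t\omega$ it acts on $\R_t\times S^{n-1}$ as a convolution in $t$ commuting with rotations. Its symbol on the $k$-th spherical harmonic is an explicit ratio of Gamma functions, obtained from the Mellin transform of the Riesz kernel or from the Stein--Weiss formula:
\[
m_k(\xi)=2^{2s}\,\frac{\Gamma\bigl(\tfrac{k+n+\tau'+i\xi}{2}\bigr)\,\Gamma\bigl(\tfrac{k-\tau'-i\xi}{2}+s\bigr)}{\Gamma\bigl(\tfrac{k+n+\tau'+i\xi}{2}-s\bigr)\,\Gamma\bigl(\tfrac{k-\tau'-i\xi}{2}\bigr)}\quad\text{(up to a shift of }\tau\text{)}.
\]
Equivalently, it is cleaner to invert: the inequality is the boundedness of the weighted Riesz potential $|x|^{-\tau}I_{2s}|x|^{\tau}$ from $L^p$ to $L^{p'}$, modulo the finitely many harmonics near $k\approx\tau$ where the symbol can vanish. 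Following Jerison--Kenig, we split the kernel into two parts:
\begin{itemize}
\item The projection onto harmonics $|k-\tau|\lesssim 1$. This is handled by the $L^p\to L^2\to L^{p'}$ bounds for spectral projections on the sphere (Sogge's estimates), using a $TT^*$ argument in $t$ with decay $e^{-c|t|}$ from the Gamma-function asymptotics.
\item The remaining harmonics. There the kernel is pointwise dominated by the unweighted Riesz kernel $|x-y|^{2s-n}$, uniformly in $\tau\notin\mathbb Z+\text{shift}$, so Hardy--Littlewood--Sobolev applies.
\end{itemize}

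Given the Carleman inequality, the argument runs as follows.
\begin{enumerate}
\item \emph{Localize.} Set $x_0=0$ and take a cutoff $\eta$ equal to $1$ on $B_{r}$ and supported in $B_{2r}\subset\Omega$, with $r$ small so that $\|V\|_{L^{n/2s}(B_{2r})}C<1/2$. Apply the inequality to $w=\eta u$, approximated by smooth functions vanishing near $0$; flatness \eqref{eq:flatness}, together with a local $L^{p'}$ improvement of flatness obtained from Sobolev embedding and the equation, justifies removing the origin.
\item \emph{Expand the commutator.} Write $(-\Delta)^s(\eta u)=\eta Vu+[(-\Delta)^s,\eta]u$. The potential term is absorbed by Hölder, since $\||x|^{-\tau}\eta Vu\|_{L^p}\le\|V\|_{L^{n/2s}(B_{2r})}\||x|^{-\tau}\eta u\|_{L^{p'}}$.
\item \emph{Bound the commutator.} The commutator $[(-\Delta)^s,\eta]u$ is not supported away from the origin, and this is the genuinely nonlocal difficulty. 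On $B_{r/2}$ it equals $-(-\Delta)^s((1-\eta)u)$, which is smooth with all derivatives bounded by $C_r\|u\|_{L^2}$, with no $\tau$-growth in the constant. So its weighted norm over $B_{r/2}$ is at most $C_r\,\|u\|_{L^2}\,\||x|^{-\tau}\|_{L^p(B_{r/2})}$, which still grows like $(r/2)^{-\tau}$ times a constant. This is unfortunately comparable to the left side evaluated on $B_{r/2}$.
\item \emph{Remove the obstruction.} To get past this, we exploit that the smooth function $(-\Delta)^s((1-\eta)u)$ near $0$ can be Taylor expanded, and that the Carleman estimate with large $\tau$ only sees the moments against the kernel. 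Choosing $\tau$ generic and subtracting a Taylor polynomial of degree about $\tau-n+2s$, as in the harmonic-polynomial trick, turns this contribution into a remainder of size $C^\tau\,\tau!^{-1}$-type smallness at scale $|x|<\delta r$. Flatness then forces the moments of $u$ to vanish in the limit.
\item \emph{Conclude.} The standard inequality reads
\[
(\rho)^{-\tau}\|u\|_{L^{p'}(B_\rho)}\le C\,(r/2)^{-\tau}\bigl(\|u\|_{L^{2}}+\ldots\bigr).
\]
With $\rho<r/2$ and $\tau\to\infty$, this gives $u=0$ on $B_\rho$. Weak unique continuation, as in \cite{Seo2015Proc,Prasad2026} or via connectedness, then gives $u\equiv0$ on $\Omega$. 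Finally, $(-\Delta)^su=0$ on an open set where $u=0$ implies $u\equiv0$ on $\R^n$ by the Ghosh--Salo--Uhlmann antilocality.
\end{enumerate}

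The main obstacle is step 3 together with the uniformity of the Carleman constant: the nonlocal tail term does not have support separated from $0$. Our first attempt will be to place the tail into the right-hand side of a two-weight version of the inequality. This uses the fact that $(-\Delta)^s((1-\eta)u)$ is real-analytic near $0$, and that the weighted Riesz potential with $\tau$ between consecutive exceptional values annihilates the corresponding polynomial moments.
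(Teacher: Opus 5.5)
You correctly locate the obstruction, but Steps 1, 3 and 4 do not remove it. The missing piece is the heart of the proof.

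Put $g:=(-\Delta)^s((1-\eta)u)$, so that $(-\Delta)^s(\eta u)=Vu-g$ on $B_r$. The function $g$ is real-analytic near $0$, but nothing in your argument makes its Taylor coefficients at $0$ vanish. If its lowest nonzero homogeneous part has degree $k$, then, since $Vu$ is flat at $0$, $\||x|^{-\tau}(-\Delta)^s(\eta u)\|_{L^p(B_{r/2})}=\infty$ for every $\tau\ge k+n/p$. So the claim in Step 3 that this term ``grows like $(r/2)^{-\tau}$'' is wrong: already $\||x|^{-\tau}\|_{L^p(B_{r/2})}=\infty$ once $\tau p\ge n$. Approximants $w_j\in C_c^\infty(\R^n\setminus\{0\})$ of $\eta u$ have the same defect. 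Your direct Carleman inequality is therefore vacuous for the very function you apply it to.

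Subtracting the Taylor polynomial of $g$ on the right-hand side is not a legitimate move. You would also have to produce a correction of $\eta u$ whose fractional Laplacian equals that polynomial near $0$ and which is negligible in the weighted norm. The assertion that ``flatness forces the moments to vanish'' is exactly the statement that needs a proof, and it cannot follow from flatness of $\eta u$ alone. A nonnegative $w\not\equiv0$ supported away from $0$ is flat at $0$, yet $(-\Delta)^s w(0)=-c_{n,s}\int w(y)|y|^{-n-2s}\,dy\neq0$. The equation has to enter, and your outline does not say how.

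Relatedly, the inverted operator is not ``$|x|^{-\tau}I_{2s}|x|^{\tau}$ modulo finitely many harmonics near $k\approx\tau$''. The Riesz potential generally has a nonzero Taylor polynomial at $0$ in every degree below $\tau$, spread over all spherical harmonics of degree $<\tau$. A bounded weighted operator is obtained only after subtracting that entire polynomial, and what becomes of the subtracted polynomial is the real issue.

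The paper's idea is to cut off the right-hand side instead of $u$. With $F=\chi Vu$ and $h=u-I_\alpha F$, the function $h$ is $s$-harmonic, hence analytic, near $0$. One writes $I_\alpha F=P^F_{m-1}+R_mF$, where $R_mF$ is, up to the weights, the Jerison--Kenig corrected family $T^{(m)}_z$ at $z=\alpha$. The weighted bound for $R_mF$, uniform in $m$, then follows from the Jerison--Kenig $L^2$ endpoint and Stein's endpoint by analytic interpolation. No spherical-harmonic analysis for $(-\Delta)^s$ has to be redone.

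Flatness and H\"older give $|x|^{-A}F\in L^Q$ for all $A$. Hence $\|R_mF\|_{L^2(B_\rho)}=O(\rho^{m-\frac12+\frac n2})$ and $\|h_m\|_{L^2(B_\rho)}=O(\rho^{m+\frac n2})$. Together with flatness of $u$, this makes the polynomial $P^F_{m-1}+T_{m-1}h$, of degree at most $m-1$, too small on $B_\rho$ to be nonzero. This cancellation lemma yields $u=R_mF+h_m$ on $B_{\rho_0}$, to which absorption applies with $m\to\infty$. Antilocality then finishes directly, with no appeal to weak unique continuation.

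A smaller point: using the exact pair $p'=2n/(n-2s)$ requires weighted $L^{p'}$ flatness of $u$. That does not follow from $L^2$ flatness and Sobolev embedding by interpolation. The paper takes $P<p_*$ precisely so that it does.
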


Theorem~\ref{thm:intro-main} is the nonlocal counterpart of the
Jerison--Kenig theorem. The proof is direct and does not use the extension problem. Its main new
ingredient is a cancellation lemma which overcomes the obstruction created
by nonlocal localization and permits the usage of Carleman estimates as in \cite{JerisonKenig1985}. The
strategy is explained in Section~\ref{sec:proof-sketch}.

\medskip

\noindent\textbf{Outline.}
Section~\ref{sec:proof-sketch} explains the main ideas of the proof.
Section~\ref{sec:prelim} collects the required properties of the fractional
Laplacian and the endpoint bounds from
\cite{JerisonKenig1985,Stein1985}. Section~\ref{sec:aux} is concerned with
some Carleman estimates. Section~\ref{sec:cancellation} uses
flatness and  Carleman estimates to establish the cancellation lemma.
Section~\ref{sec:mainproof} combines these ingredients to prove
Theorem~\ref{thm:intro-main}.

\medskip

\noindent\textbf{LLM Declaration.}
Large language models were used in the preparation of this manuscript. The
author takes responsibility for its contents.

\section{Main Ideas}\label{sec:proof-sketch}

For the Schr\"odinger equation
\[
    -\Delta u=Vu,
\]
the Jerison--Kenig proof relies on a Carleman estimate of the form
\[
 \bigl\||x|^{-t}w\bigr\|_{L^p(dx/|x|^n)}
 \le C_t\bigl\||x|^{2-t}\Delta w\bigr\|_{L^q(dx/|x|^n)},
 \qquad
 \frac1q-\frac1p=\frac2n,
\]
for large noninteger \(t\). Applying it to a cutoff of \(u\) gives a term
involving \(Vu\) and the commutator \([-\Delta,\chi]u\). The latter is
supported where the cutoff changes. Hence, on a sufficiently small ball,
H\"older's inequality and the smallness of \(\|V\|_{L^{n/2}}\) absorb the
potential term, while the weighted commutator term tends to zero as
\(t\to\infty\).

For \((-\Delta)^s\), the formal analogue would replace \(2\) by \(2s\) and
use
\[
    \frac1Q-\frac1P=\frac{2s}{n},
\]
which is the relation paired with \(V\in L^{n/(2s)}\). The difficulty is the
cutoff commutator:
\[
 [(-\Delta)^s,\chi]u(x)
 =c_{n,s}\int_{\mathbb R^n}
   \frac{\chi(x)-\chi(y)}{|x-y|^{n+2s}}u(y)\,dy.
\]
Even when \(x\) lies in the region where \(\chi\equiv1\), this expression
sees all exterior values of \(u\). It is therefore not supported in an
annulus and is generally nonzero near the vanishing point. A large singular
weight can consequently make its contribution non-negligible, or even
non-integrable.

A natural alternative is to leave the equation unchanged near the vanishing
point and localize only its right-hand side. Let \(\chi\) equal one near the
origin and set
\[
    F:=\chi Vu.
\]
Then \(F=Vu\) near the origin, while \(F\) is compactly supported. With
\(\alpha:=2s\), the Riesz potential \(I_\alpha F\) satisfies
\[
    (-\Delta)^sI_\alpha F=F.
\]
Consequently,
\[
    h:=u-I_\alpha F
\]
is \(s\)-harmonic near the origin.

One may now try to prove a Carleman estimate of the form
\[
    \||x|^{-a_m}I_\alpha F\|_{L^P}
    \le C\||x|^{-a_m}F\|_{L^Q},
    \qquad
    a_m=m-\frac12+\frac nP.
\]
However, examining the Riesz kernel near the origin shows why this estimate cannot
hold. Its first \(m\) terms produce a polynomial \(P_{m-1}^F\) of degree at
most \(m-1\), and, unless this polynomial vanishes,
\[
    |x|^{-a_m}P_{m-1}^F\notin L^P(B_1).
\]
The polynomial must therefore be separated from the part to which the
Carleman estimate is applied. We write
\[
    I_\alpha F=P_{m-1}^F+R_mF
\]
and prove instead
\[
    \||x|^{-a_m}R_mF\|_{L^P}
    \le C\||x|^{-a_m}F\|_{L^Q},
\]
with \(C\) independent of \(m\).

The polynomial \(P_{m-1}^F\) has been removed from the Carleman estimate,
but it remains in the decomposition of \(u\). Moreover, since \(h\) is
analytic near the origin,
\[
    h=T_{m-1}h+h_m,
\]
where \(T_{m-1}h\) is its Taylor polynomial of degree at most \(m-1\).
Therefore
\[
    u=
    \bigl(P_{m-1}^F+T_{m-1}h\bigr)+R_mF+h_m.
\]
A Carleman estimate shows that \(R_mF\) is of strictly
higher order than every polynomial of degree at most \(m-1\); analyticity
gives the same conclusion for \(h_m\). Since \(u\) vanishes to infinite
order, the polynomial in the preceding decomposition must vanish:
\[
    P_{m-1}^F+T_{m-1}h\equiv0.
\]
This is the cancellation lemma, and it gives
\[
    u=R_mF+h_m
\]
for every \(m\ge1\).

Next, we use the same Carleman estimate a second time, together with the
standard absorption argument to conclude that \(u\) vanishes on a ball; antilocality of the fractional Laplacian
then forces \(u\equiv0\) in \(\mathbb R^n\).

\section{Preliminaries}\label{sec:prelim}

We first fix the exponents and Riesz-potential notation used throughout. For the rest of the paper we set
\[
    \alpha:=2s\in(0,2),
    \qquad
    p_*:=\frac{2n}{n-\alpha}.
\]
We also fix once and for all
\begin{equation}\label{eq:fixed-parameters}
    \beta:=n-\frac34,
    \qquad
    \eps:=\frac{\alpha}{8n(n-1)},
\end{equation}
and define $P,Q$ by
\begin{equation}\label{eq:PQ}
    \frac1Q=\frac12+\frac{\alpha}{2n}+\eps,
    \qquad
    \frac1P=\frac12-\frac{\alpha}{2n}+\eps.
\end{equation}
For later interpolation we also define $S,R$ by
\begin{equation}\label{eq:SR}
    \frac1S=\frac12+\frac{\beta}{2n}+\frac{\beta}{8n(n-1)},
    \qquad
    \frac1R=\frac12-\frac{\beta}{2n}+\frac{\beta}{8n(n-1)}.
\end{equation}
Notice that $S$ and $R$ depend only on the dimension. Finally, for $m\ge1$ set
\begin{equation}\label{eq:tm-am}
    t_m:=m-\frac12,
    \qquad
    a_m:=t_m+\frac nP=m-\frac12+\frac nP.
\end{equation}

\begin{lemma}\label{lem:exponents}
The exponents above satisfy
\begin{equation}\label{eq:critical-relations}
    1<Q<2<P<p_*,
    \qquad
    \frac1Q-\frac1P=\frac{\alpha}{n},
    \qquad
    Q<\frac n\alpha.
\end{equation}
Moreover
\begin{equation}\label{eq:critical-holder}
    \frac1Q=\frac1{n/\alpha}+\frac1P,
\end{equation}
and
\begin{equation}\label{eq:SR-relations}
    1<S<\frac n\beta,
    \qquad
    \frac1S-\frac1R=\frac\beta n.
\end{equation}
\end{lemma}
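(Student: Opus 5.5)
The plan is to verify each relation directly from the definitions \eqref{eq:fixed-parameters}, \eqref{eq:PQ}, \eqref{eq:SR}, using only $n\ge3$ and $0<\alpha<2$. No earlier result is needed; the proof is elementary arithmetic, done in three steps.

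\emph{Step 1: the identities.} Subtracting the two formulas in \eqref{eq:PQ}, the $\eps$ terms cancel and the $\frac12$ terms cancel, giving $\frac1Q-\frac1P=\frac{\alpha}{n}$. This identity is exactly \eqref{eq:critical-holder}, since $\frac{1}{n/\alpha}=\frac{\alpha}{n}$. Likewise, subtracting the two formulas in \eqref{eq:SR} gives $\frac1S-\frac1R=\frac{\beta}{n}$.

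\emph{Step 2: the inequalities for $P,Q$.} I check the chain $1<Q<2<P<p_*$ and the bound $Q<\frac n\alpha$ one at a time.
\begin{itemize}
\item $Q<2$: this is immediate, since $\frac1Q>\frac12$.
\item $Q>1$: we have $\frac{\alpha}{2n}<\frac1n\le\frac13$ and $\eps<\frac{\alpha}{2n}$. Hence $\frac1Q<\frac12+\frac23<\dots$; more precisely $\frac1Q<\frac12+\frac{2}{n}\cdot\frac12\cdot 2\le\frac12+\frac13+\frac13$ is too crude, so instead use $\frac{\alpha}{2n}+\eps<\frac{\alpha}{n}<\frac2n\le\frac23$, which only gives $\frac1Q<\frac76$. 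The sharper bound needed is $\frac{\alpha}{2n}+\eps<\frac{1}{n}+\frac{1}{4n}\le\frac{5}{12}<\frac12$, using $\alpha<2$ and $\eps=\frac{\alpha}{8n(n-1)}<\frac{2}{8n}$. Therefore $\frac1Q<1$.
\item $P>2$: this amounts to $\eps<\frac{\alpha}{2n}$, which holds because $8n(n-1)>2n$.
\item $P<p_*$: since $\frac{1}{p_*}=\frac12-\frac{\alpha}{2n}$ and $\eps>0$, we get $\frac1P>\frac1{p_*}$.
\item $Q<\frac n\alpha$: this is equivalent to $\frac12+\eps>\frac{\alpha}{2n}$, which holds because $\frac{\alpha}{2n}<\frac1n<\frac12$.
\end{itemize}

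\emph{Step 3: the inequalities for $S$.} This is the only step that uses the specific value $\beta=n-\frac34$.
\begin{itemize}
\item $S<\frac n\beta$: this is equivalent to $\frac12+\frac{\beta}{8n(n-1)}>\frac{\beta}{2n}$, which is true since $\beta<n$.
\item $S>1$: this requires $\frac{\beta}{2n}\bigl(1+\frac{1}{4(n-1)}\bigr)<\frac12$, i.e.\ $\beta(4n-3)<4n(n-1)$. Substituting $\beta=\frac{4n-3}{4}$, this becomes $(4n-3)^2<16n(n-1)$, i.e.\ $16n^2-24n+9<16n^2-16n$, i.e.\ $9<8n$. This holds for $n\ge3$.
\end{itemize}
It is also worth recording that $\frac1R>0$, because $\beta<n$; so $R$ is a genuine finite exponent.

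The main obstacle, such as it is, is the bound $S>1$. There the choice $\beta=n-\frac34$ is tuned so that the perturbation $\frac{\beta}{8n(n-1)}$ does not push $\frac1S$ past $1$. Everything else is immediate from the smallness of $\eps$ and from $\alpha<2\le n$.
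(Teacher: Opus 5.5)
Your verification is correct and follows the same route as the paper: you subtract the defining formulas to get the identities, and you reduce each inequality to an elementary comparison using $\eps<\frac{\alpha}{2n}$, $\alpha<2\le n$, and $\beta=n-\frac34$; your reduction of $S>1$ to $9<8n$ is equivalent to the paper's condition $\frac{\beta}{8n(n-1)}<\frac{3}{8n}$. The only fix needed is presentational: in the $Q>1$ bullet, delete the abandoned crude estimates and keep only $\frac{\alpha}{2n}+\eps<\frac1n+\frac1{4n}\le\frac5{12}<\frac12$.
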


\begin{proof}
All identities follow directly from \eqref{eq:PQ} and \eqref{eq:SR}. Since $\eps>0$, one has $P<p_*$ and $Q<2$. Moreover,
\[
    \frac{\alpha}{2n}+\eps<\frac12,
\]
so $1/Q<1$ and hence $Q>1$. Since
\[
    \eps=\frac{\alpha}{8n(n-1)}<\frac{\alpha}{2n},
\]
we have $P>2$. Also $1/Q>\alpha/n$ because $\alpha<n$, so $Q<n/\alpha$. For $S$, the inequality $S<n/\beta$ is equivalent to
\[
    \frac1S>\frac\beta n,
\]
which follows from $\beta<n$. The inequality $S>1$ is equivalent to
\[
    \frac{\beta}{8n(n-1)}<\frac{n-\beta}{2n}=\frac{3}{8n},
\]
which holds for $n\ge3$.
\end{proof}

For \(0<\alpha<n\), let \(I_\alpha\) denote the Riesz potential normalized by
\(\widehat{I_\alpha F}(\xi)=|\xi|^{-\alpha}\widehat F(\xi)\).  We shall use
the Hardy--Littlewood--Sobolev mapping
\begin{equation}\label{eq:HLS}
    I_\alpha:L^Q(\mathbb R^n)\longrightarrow L^P(\mathbb R^n),
    \qquad
    \frac1P=\frac1Q-\frac\alpha n,
    \qquad
    1<Q<\frac n\alpha,
\end{equation}
(see \cite[Chapter~V, \S1]{Stein1970}) and the identity
\begin{equation}\label{eq:Riesz-inverse}
    (-\Delta)^{\alpha/2}I_\alpha F=F \quad \text{in} \quad \mathcal S'(\mathbb R^n),
\end{equation}
which follows from \(\widehat{(-\Delta)^{\alpha/2}I_\alpha F}(\xi)=|\xi|^\alpha|\xi|^{-\alpha}\widehat F(\xi)=\widehat F(\xi)\)
on the Fourier side, extended from Schwartz \(F\) to \(F\in L^Q(\mathbb R^n)\)
by density using \eqref{eq:HLS} and the continuity of \((-\Delta)^{\alpha/2}\)
as a tempered-distribution operator (see also \cite[Chapter~V, \S1]{Stein1970}).

\subsection{Endpoint bounds}
We recall certain endpoint bounds from \cite{JerisonKenig1985, Stein1985}.
We set
\[
    d\mu(x):=\frac{dx}{|x|^n}.
\]
For $m\ge1$ and $0<\Re z<n$, let
\begin{equation}\label{eq:Tz-kernel}
 T_z^{(m)}g(x):=\int_{\mathbb R^n}K_z^{(m)}(x,y)g(y)\,d\mu(y),
\end{equation}
where
\begin{align}
 K_z^{(m)}(x,y)
 &:=
 \frac{c_{n,z}}{\Gamma((n-z)/2)}
 |x|^{-t_m}|y|^{n+t_m-z}
 \left[
 |x-y|^{-n+z}
 -\sum_{j=0}^{m-1}\frac1{j!}
 \left.\partial_r^j|rx-y|^{-n+z}\right|_{r=0}
 \right]                                      \label{eq:Kz-cited},\\
 c_{n,z}
 &:=
 \pi^{-n/2}2^{-z}
 \frac{\Gamma((n-z)/2)}{\Gamma(z/2)}.
\end{align}
The family \eqref{eq:Tz-kernel}--\eqref{eq:Kz-cited} is the normalised
homogeneous-distribution family of \cite[Section~2]{JerisonKenig1985} after choosing $t=t_m$.
For fixed $g\in C_c^\infty(\mathbb R^n\setminus\{0\})$ and $x\ne0$, the map
$z\mapsto T_z^{(m)}g(x)$ extends to an entire, hence continuous, function of
$z$ on all of $\mathbb C$ (\cite[Section~2]{JerisonKenig1985}, where this
entirety is established via the Gamma-function normalisation
$c_{n,z}/\Gamma((n-z)/2)$). The boundary operators
$T_{i\gamma}^{(m)}$ and $T_{\beta+i\gamma}^{(m)}$ appearing below are
therefore unambiguously defined as $T_{i\gamma}^{(m)}g(x)=\lim_{\varepsilon\downarrow0}T_{\varepsilon+i\gamma}^{(m)}g(x)$
and likewise at $\Re z=\beta$, exactly as in
\cite[Section~3]{JerisonKenig1985}.

\begin{lemma}\label{lem:classical-endpoints}
For every \(m\ge1\), \(g\in C_c^\infty(\mathbb R^n\setminus\{0\})\), and
\(\gamma\in\mathbb R\),
\begin{align}
 \|T_{i\gamma}^{(m)}g\|_{L^2(d\mu)}
 &\le C_ne^{c_n|\gamma|}\|g\|_{L^2(d\mu)},                    \label{eq:JK-endpoint}\\
 \|T_{\beta+i\gamma}^{(m)}g\|_{L^R(d\mu)}
 &\le C_ne^{c_n|\gamma|}\|g\|_{L^S(d\mu)}.                   \label{eq:Stein-endpoint}
\end{align}
The constants are independent of \(m\).
\end{lemma}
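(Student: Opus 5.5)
This lemma restates the two endpoint estimates of Jerison--Kenig \cite{JerisonKenig1985} and Stein \cite{Stein1985}, specialised to $t=t_m=m-\tfrac12$. The plan is therefore to reduce it to those papers and to check that their hypotheses hold uniformly in $m$. The key structural point is that $t_m$ always has distance exactly $\tfrac12$ from the integers. All constants in the cited endpoint bounds depend only on $n$, on the imaginary part $\gamma$ (through the Gamma-normalisation, giving the factor $e^{c_n|\gamma|}$), and on $\operatorname{dist}(t,\mathbb Z)$. That distance is fixed here, which gives the claimed independence of $m$.

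\textbf{The $L^2$ endpoint.} For \eqref{eq:JK-endpoint} I would pass to polar coordinates $x=r\theta$ and write $r=e^{\tau}$, so that $L^2(d\mu)=L^2(d\tau\,d\theta)$. On this space $T^{(m)}_{i\gamma}$ becomes a convolution in $\tau$. Expanding $|x-y|^{-n+z}$ in Gegenbauer polynomials, $T^{(m)}_{i\gamma}$ acts diagonally on spherical harmonics of degree $k$ as a Fourier multiplier in $\tau$. Subtracting the terms $j\le m-1$ is exactly what removes the contributions that would otherwise diverge as $\tau\to\pm\infty$. On $\Re z=0$, with the normalisation $c_{n,z}/\Gamma((n-z)/2)$, Jerison--Kenig compute that the multiplier is a ratio of Gamma functions,
\[
\frac{\Gamma\bigl(\tfrac{k+i\sigma+t+\cdots}{2}\bigr)\Gamma(\cdots)}{\Gamma(\cdots)\Gamma(\cdots)}.
\]
By Stirling's formula this ratio is bounded uniformly in $k$, in $\sigma$, and in $t$, as long as $t$ stays a fixed distance from $\mathbb Z$. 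The Gamma factors in $\gamma$ contribute $e^{c_n|\gamma|}$. Plancherel in $\tau$ together with orthogonality of spherical harmonics then gives \eqref{eq:JK-endpoint}. I would cite \cite[Section~2--3]{JerisonKenig1985} for the multiplier computation and only verify that the bound depends on $t$ through $\operatorname{dist}(t,\mathbb Z)=\tfrac12$.

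\textbf{The $L^S\to L^R$ endpoint.} For \eqref{eq:Stein-endpoint} at $\Re z=\beta=n-\tfrac34$, the kernel is a Taylor remainder of $|x-y|^{-n+z}$, which has exponent $-n+z=-\tfrac34+i\gamma$, so it is only mildly singular. I would estimate $|K^{(m)}_{\beta+i\gamma}(x,y)|$ pointwise in three regions.
\begin{itemize}
\item For $|y|\ge 2|x|$ and $|y|\le |x|/2$, Taylor's remainder formula in $r$ gives a bound of the form $(|x|/|y|)^{\pm1/2}\,|x-y|^{-3/4}$ times the weights. The $\pm\tfrac12$ exponent is again produced by $t_m=m-\tfrac12$.
\item For $|y|\sim|x|$, the kernel is dominated by $|x-y|^{-3/4}$ and the Taylor polynomial, whose sum is bounded uniformly in $m$ because of the geometric decay.
\end{itemize}
This is Stein's argument in \cite{Stein1985}. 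With these bounds, the off-diagonal parts are handled by Schur's test on $d\mu$, and the near-diagonal part by the Hardy--Littlewood--Sobolev inequality on dyadic shells, using $\tfrac1S-\tfrac1R=\tfrac\beta n$ and $1<S<n/\beta$ from Lemma~\ref{lem:exponents}.

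\textbf{Main obstacle.} The hard step is showing that the Taylor-remainder bounds are uniform in $m$. The number of subtracted terms grows with $m$, and a naive bound on the polynomial part grows like $m$ times Gegenbauer coefficients. I would first try Stein's trick of writing the remainder as an integral of the $m$-th derivative along $r$. Since $z$ is fixed with $\Re z=\beta$, the Gegenbauer coefficients $C^{(n-z)/2}_j$ are polynomially bounded in $j$. The factor $(|x|/|y|)^{m-t_m}=(|x|/|y|)^{1/2}$ together with geometric series then yields $m$-independent constants, except near $|x|=|y|$, where the $|x-y|^{-3/4}$ singularity is integrable.
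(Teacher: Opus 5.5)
Your reduction is the paper's proof. The paper applies Jerison--Kenig's Lemma~2.3 and Stein's Lemma~4 with $t=t_m=m-\frac12$, and notes that the admissibility parameters $\operatorname{dist}(t_m,\mathbb Z)$ and $\Delta(t_m)=\max\{m-t_m,\,t_m-(m-1)\}$ both equal $\frac12$ for every $m$, so the cited constants are independent of $m$. The one hypothesis you leave unchecked is the condition in Stein's lemma that ties $\beta$ to $t$, namely $n-1<\beta<n-\Delta(t_m)=n-\frac12$. The choice $\beta=n-\frac34$ is made precisely so that this holds. Together with $\frac1S-\frac1R=\frac\beta n$ and $1<S<\frac n\beta$, it is all that has to be verified.

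Do not, however, offer your last two paragraphs as an argument for uniformity in $m$, because near the diagonal they fail. Let $\rho=|x|/|y|$, $\lambda=(n-z)/2$, and let $\theta$ be the angle between $x$ and $y$. Up to the normalising constant, the $j$-th weighted Taylor term is $C^{\lambda}_j(\cos\theta)\,\rho^{\,j-t_m}$.

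\textbf{The region $|x|<|y|\le2|x|$.} Here the weighted Taylor polynomial is not bounded uniformly in $m$: its $j=0$ term is $\rho^{-t_m}=2^{m-\frac12}$ at $|y|=2|x|$. It is cancelled only by the singular term. So in this region you must work with the tail $\sum_{j\ge m}$, and you cannot split the kernel into ``$|x-y|^{-3/4}$ plus a uniformly bounded polynomial'' as you propose.

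\textbf{The region $|y|<|x|$.} Here the factor $\rho^{\,j-t_m}$ loses its geometric decay as $\rho\downarrow1$. Summing $|C^{\lambda}_j(\cos\theta)|$ in absolute value, even with the standard Gegenbauer bounds, gives only $m$-uniform majorants such as $(\rho-1)^{-(n-\beta)}$. These are singular on the whole sphere $\{|x|=|y|\}$, not just at $x=y$. Such a kernel does not map $L^S(d\mu)$ into $L^R(d\mu)$; test it on $\delta^{-n/S}\mathbf 1_{B_\delta(y_0)}$ with $|y_0|=1$ and let $\delta\to0$.

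So ``the singularity is integrable'' is not the relevant criterion. You need domination by a normalised $|x-y|^{-(n-\beta)}$, uniformly in $m$, and getting it requires the oscillation of $C^{\lambda}_j$. That uniform near-diagonal bound is exactly what Stein's lemma provides, so cite it rather than re-derive it.
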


\begin{proof}

For \eqref{eq:JK-endpoint}, we apply \cite[Lemma~2.3]{JerisonKenig1985} with
\(t=m-\frac12\).  The constant in \cite[Lemma~2.3]{JerisonKenig1985} depends, apart from \(n\),
only on the admissibility parameter
\[
 \operatorname{dist}(t_m,\mathbb Z)=\frac12,
\]
and is therefore independent of \(m\).

For \eqref{eq:Stein-endpoint}, we apply \cite[Lemma~4]{Stein1985}.  The relevant admissibility parameter is
\[
 \Delta(t_m)
 =\max\{m-t_m,t_m-(m-1)\}
 =\frac12
\]
and the hypotheses of \cite[Lemma~4]{Stein1985} reduce to
\[
 n-1<\beta=n-\frac34<n-\Delta(t_m)=n-\frac12,
 \qquad
 \frac1R=\frac1S-\frac{\beta}{n},
 \qquad
 1<S<\frac n\beta.
\]
which follow from \eqref{eq:SR-relations}.  Since \(\Delta(t_m)\)
is fixed, we get \eqref{eq:Stein-endpoint} with constants
independent of \(m\).
\end{proof}
\subsection{Analyticity and Antilocality}
We now recall certain properties of the fractional Laplace. We use the natural tail space
\[
    L_s^1(\R^n)
    :=\left\{f:\int_{\R^n}\frac{|f(x)|}{1+|x|^{n+2s}}\,dx<\infty\right\}.
\]
\begin{lemma}\label{lem:analytic}
Let $h\in L_s^1(\R^n)$ satisfy $(-\Delta)^s h=0$ distributionally in $B_{2R}$. Then there exist $\rho_0\in(0,R)$ and $H>0$ such that, for every $m\ge1$,
\begin{equation}\label{eq:h-expansion}
    h=T_{m-1}h+h_m\qquad\text{in }B_{\rho_0},
\end{equation}
where $T_{m-1}h$ is the Taylor polynomial of $h$ at the origin of degree at
most $m-1$, and
with
\begin{equation}\label{eq:analytic-rem}
    |h_m(x)|\le H\rho_0^{-m}|x|^m,
    \qquad |x|<\rho_0.
\end{equation}
\end{lemma}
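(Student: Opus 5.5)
The approach is to show that $h$ is real analytic in a neighbourhood of the origin, with a uniform bound on its complex extension, and then read off \eqref{eq:analytic-rem} from Cauchy estimates. The plan has three steps: interior smoothness, analyticity via the Poisson kernel, and the remainder bound.

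\textbf{Step 1: smoothness.} We first show $h\in C^\infty(B_{3R/2})$. Take a cutoff $\eta\in C_c^\infty(B_{2R})$ with $\eta\equiv1$ on $B_{7R/4}$, and split $h=\eta h+(1-\eta)h$. On $B_{3R/2}$ the tail part satisfies
\[
    (-\Delta)^s\big((1-\eta)h\big)(x)=-c_{n,s}\int\frac{(1-\eta(y))h(y)}{|x-y|^{n+2s}}\,dy .
\]
This kernel is smooth, indeed analytic, in $x$ for $|x|<3R/2$ and $|y|>7R/4$, and it is integrable against $h$ because $h\in L^1_s$. Hence $(-\Delta)^s(\eta h)$ is smooth in $B_{3R/2}$. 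Standard interior regularity for the fractional Laplacian, via Riesz-potential bootstrapping, then gives smoothness of $\eta h$. This step is routine, but it is only needed to make sense of the next one.

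\textbf{Step 2: Poisson representation and analyticity.} For $|x|<r$, with $r=R$, an $s$-harmonic function in $B_{2R}$ admits the representation
\[
    h(x)=\int_{|y|>r}P_r(x,y)\,h(y)\,dy,
\qquad
    P_r(x,y)=c(n,s)\Big(\frac{r^2-|x|^2}{|y|^2-r^2}\Big)^{s}\frac1{|x-y|^n}.
\]
One justifies this formula for the smooth distributional solution from Step 1 by uniqueness for the Dirichlet problem on $B_r$ with exterior datum $h$. The representation has two defects for our purpose: the integral is singular near $|y|=r$, and $h$ need not be locally bounded on the sphere. To avoid both, average the formula over $r\in(R/2,R)$. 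This yields
\[
    h(x)=\int_{\R^n}K(x,y)\,h(y)\,dy
\]
with $K(x,y)$ integrable in $y$ and with $K(\cdot,y)$ extending holomorphically to $z\in\C^n$, $|z|<R/8$, uniformly in $y$. Here $(r^2-z\cdot z)^s$ is holomorphic because $r^2-z\cdot z$ stays near the positive reals, and $|z-y|^{-n}$ is analytic once $|y|>R/2$. The kernel decays like $|y|^{-n-2s}$ at infinity, which matches $L^1_s$. For the inner region $|y|<R/2$, we use instead that $h$ is smooth there by Step 1, so those contributions are finite. This inner region is the delicate part: the averaged kernel is supported in $|y|>R/2$, so it does not actually arise, but the constants must be checked. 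We conclude that $h$ extends holomorphically to the complex polydisc $\{|z_j|<2\rho_0\}$ with $|h|\le H$ there, for some $\rho_0\in(0,R)$.

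\textbf{Step 3: the remainder.} For real $x$ with $|x|<\rho_0$, consider the one-variable function $g(\lambda)=h(\lambda x/|x|)$, which is holomorphic for $|\lambda|<2\rho_0$. The Taylor remainder of order $m$ at $\lambda=|x|$ equals
\[
    \frac{|x|^m}{2\pi i}\oint_{|\lambda|=\rho_1}\frac{g(\lambda)}{\lambda^m(\lambda-|x|)}\,d\lambda .
\]
Taking $\rho_1$ slightly below $2\rho_0$, this gives
\[
    |h_m(x)|\le \frac{H\rho_1}{\rho_1-|x|}\,\rho_1^{-m}|x|^m\le 2H\rho_0^{-m}|x|^m,
\]
since $\rho_1>\rho_0$ and $\rho_1-|x|\ge\rho_1/2$ once $\rho_1\ge 2|x|$. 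Renaming $2H$ as $H$ yields \eqref{eq:analytic-rem}. The same parametrization identifies $T_{m-1}h$ as the homogeneous Taylor polynomial, so \eqref{eq:h-expansion} holds with constants independent of $m$.

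The main obstacle is Step 2: rigorously obtaining the Poisson representation for a merely distributional solution with $L^1_s$ tails. Averaging in $r$ is the device that gives uniform complex bounds on the kernel.
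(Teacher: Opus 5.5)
Your proof is correct in substance, but it takes a different route from the paper. The paper does not prove analyticity. It quotes \cite[Main Theorem]{CarbottiEtAl2024} to get that $h$ is real analytic in $B_{2R}$, and chooses $\rho_0$ so that $\sum_\gamma|a_\gamma|\rho_0^{|\gamma|}\le H$ for the Taylor coefficients at $0$. It then bounds the remainder termwise via $|x^\gamma|\le|x|^{|\gamma|}\le\rho_0^{|\gamma|}(|x|/\rho_0)^m$ for $|\gamma|\ge m$.

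You instead rebuild analyticity from scratch: interior smoothness, then the Riesz--Poisson kernel of the ball averaged in the radius, then a bounded holomorphic extension to a polydisc. You finish with one-variable Cauchy estimates along rays. The paper's route is two lines given the citation. Yours is self-contained modulo the Poisson formula and interior regularity, and it is quantitative: $\rho_0$ is a dimensional fraction of $R$ and $H\lesssim_R\|h\|_{L^1_s}$. The main proof, however, only uses that $\rho_0$ and $H$ are independent of $m$.

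Your averaging device is stronger than you use it. The averaged kernel satisfies $|K(z,y)|\lesssim(1+|y|)^{-n-2s}$ uniformly for small complex $z$. So you can apply the pointwise Poisson formula to the mollifications $h*\phi_\delta$, which are smooth and $s$-harmonic in $B_{2R-\delta}$, and let $\delta\to0$ in $L^1_s$. This gives $h=\int K(\cdot,y)h(y)\,dy$ a.e.\ near $0$ directly. It makes Step 1, the least justified part of your sketch, unnecessary.

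The ``defects'' you list in Step 2 are not real. After Step 1, $h$ is bounded near $\partial B_r$, and $(|y|^2-r^2)^{-s}$ is integrable because $s<1$.

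Step 3 needs one small fix. With $\rho_1<2\rho_0$, the inequality $\rho_1-|x|\ge\rho_1/2$ fails for $\rho_1/2<|x|<\rho_0$. Take, say, $\rho_1=\tfrac32\rho_0$ and use $\rho_1-|x|\ge\rho_0/2$ together with $\rho_1^{-m}\le\rho_0^{-m}$. This gives $|h_m(x)|\le 3H\rho_0^{-m}|x|^m$, still uniform in $m$.
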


\begin{proof}
From \cite[Main Theorem]{CarbottiEtAl2024}, \(h\) is real analytic in \(B_{2R}\).  Hence
there are \(\rho_0\in(0,R)\) and \(H>0\) such that the Taylor series
\[
    h(x)=\sum_{\gamma\in\mathbb N_0^n}a_\gamma x^\gamma
\]
converges absolutely for \(|x|<\rho_0\) and
\[
    \sum_{\gamma\in\mathbb N_0^n}|a_\gamma|\rho_0^{|\gamma|}\le H.
\]
Consequently, for \(|x|<\rho_0\),
\[
\begin{aligned}
 |h(x)-T_{m-1}h(x)|
 &\le \sum_{|\gamma|\ge m}|a_\gamma||x|^{|\gamma|}  \\
 &\le \left(\frac{|x|}{\rho_0}\right)^m
       \sum_{\gamma\in\mathbb N_0^n}|a_\gamma|\rho_0^{|\gamma|}
 \le H\rho_0^{-m}|x|^m.
\end{aligned}
\]
\end{proof}

\begin{lemma}[Antilocality]\label{lem:antilocality}
Let $u\in H^s(\mathbb R^n)$. If $u=0$ and
$(-\Delta)^s u=0$ distributionally on a nonempty open set, then
$u\equiv0$ in $\mathbb R^n$.
\end{lemma}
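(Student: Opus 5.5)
The plan is to reduce antilocality to the classical Fourier-analytic fact that a distribution and its fractional Laplacian cannot both vanish on an open set unless the distribution is zero.

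\textbf{Step 1: normalization.} Let $U$ be the open set. Translating and shrinking, I may assume $U\supset B_{2r}(0)$ for some $r>0$. Since $u\in H^s$, $(-\Delta)^s u\in H^{-s}$ is a tempered distribution, and the hypotheses read $u=0$ and $(-\Delta)^s u=0$ in $\mathcal D'(B_{2r})$.

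\textbf{Step 2: localization via mollification.} Take a radial mollifier $\phi_\delta$ supported in $B_\delta$ with $\delta<r$. Set $u_\delta:=\phi_\delta*u\in H^\infty(\R^n)\subset C^\infty$. Convolution commutes with $(-\Delta)^s$, which is a Fourier multiplier. Hence $u_\delta$ and $(-\Delta)^s u_\delta=\phi_\delta*(-\Delta)^s u$ both vanish on $B_r$. If I show $u_\delta\equiv0$ for every small $\delta$, then letting $\delta\to0$ gives $u=0$ in $H^s$. So I may assume $u$ is smooth and lies in every $H^k$.

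\textbf{Step 3: pointwise formula for smooth $u$.} For $x\in B_r$ with $u\equiv0$ near $x$, the singular-integral representation gives
\[
(-\Delta)^s u(x)=-c_{n,s}\int_{\R^n\setminus B_r}\frac{u(y)}{|x-y|^{n+2s}}\,dy .
\]
This integral converges absolutely because $u\in L^2$ and $|x-y|\ge r-|x|$. The representation itself is valid for $u\in H^k$ with $k$ large, since both sides agree as $L^2$ functions. The vanishing hypothesis therefore yields
\[
\int_{\R^n\setminus B_r}\frac{u(y)}{|x-y|^{n+2s}}\,dy=0 \quad\text{for all } x\in B_r .
\]

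\textbf{Step 4: from the identity to vanishing.} This is the crux. The function $G(x):=\int u(y)|x-y|^{-n-2s}\,dy$ is real analytic in $B_r$, and I use derivatives at $x=0$. Differentiating under the integral sign, all derivatives of $G$ at $0$ vanish. Up to nonzero constants, these derivatives are the moments
\[
\int_{|y|\ge r} u(y)\,\partial^\gamma_x |x-y|^{-n-2s}\big|_{x=0}\,dy .
\]
The harmonic-type expansion of $|x-y|^{-n-2s}$ in Gegenbauer polynomials suggests these moments vanish against $|y|^{-n-2s-2k-\ell}Y_\ell(y/|y|)$ for all spherical harmonics $Y_\ell$ and all $k\ge0$.

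This alone is not obviously enough to force $u=0$ on $\{|y|\ge r\}$, since it only tests radial profiles of a special discrete family. The robust route I would instead follow is the Caffarelli--Silvestre extension. Let $U(x,y)$ be the extension of $u$ to $\R^{n+1}_+$, which solves $\operatorname{div}(y^{1-2s}\nabla U)=0$. Both $U(\cdot,0)=0$ and $\lim y^{1-2s}\partial_yU=0$ hold on $B_r$. Reflecting evenly across $B_r\times\{0\}$ gives a solution of a degenerate elliptic equation with $A_2$ weight $|y|^{1-2s}$ across the hyperplane, vanishing on an $n$-dimensional set with vanishing conormal derivative. Then:
\begin{itemize}
\item[(a)] Weak unique continuation for this class, via the Fabes--Kenig--Serapioni theory plus Carleman estimates or a frequency argument as in Fall--Felli or R\"uland, gives $U\equiv0$ near $B_r\times\{0\}$.
\item[(b)] Since $U$ is real analytic in $y>0$, $U\equiv0$ in the whole upper half space, hence $u=U(\cdot,0)=0$.
\end{itemize}

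I expect step (a), the Cauchy-data unique continuation for the weighted equation, to be the main obstacle. There I would simply invoke R\"uland's or Fall--Felli's result rather than reprove it. Alternatively, I would cite Ghosh--Salo--Uhlmann, whose theorem is exactly this statement for $u\in H^r$ and which I would quote directly.
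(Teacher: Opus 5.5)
Your proposal is correct and ends where the paper does: the paper's proof is only a citation of the antilocality theorem of Ghosh--Salo--Uhlmann and the other listed references, and your Caffarelli--Silvestre route with boundary unique continuation for $\operatorname{div}(|y|^{1-2s}\nabla U)=0$ (citing R\"uland or Fall--Felli) is essentially how that cited theorem is proved. Incidentally, the moment route you abandoned in Step 4 does close and would give a self-contained proof: your moment identities hold because the coefficients expanding $C_j^{(n+2s)/2}$ in the Gegenbauer polynomials $C_{j-2k}^{(n-2)/2}$ are all positive; with $\sigma=(r/|y|)^2$ they become $\int_0^1 g_{\ell m}(\sigma)\sigma^k\,d\sigma=0$ for all $k\ge0$, where $u_{\ell m}$ are the spherical-harmonic components of $u$ and $g_{\ell m}(\sigma)=u_{\ell m}(r\sigma^{-1/2})\sigma^{s+\ell/2-1}\in L^1(0,1)$, so the Weierstrass theorem forces $u=0$ on $\{|y|>r\}$.
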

\begin{proof}
This is the antilocality property of the fractional Laplacian; see
\cite{GhoshSaloUhlmann2020,BergerSchilling2026,Prasad2026}.
\end{proof}

\section{$L^Q \rightarrow L^P$ Carleman Estimates}\label{sec:aux}
In this section, we prove Carleman estimates needed in the proof of Theorem~\ref{thm:intro-main}.

\begin{lemma}\label{lem:low-order-interpolation}
For $P,Q$ defined by \eqref{eq:PQ} we have,
\begin{equation}\label{eq:Talpha-PQ}
    \|T_\alpha^{(m)}g\|_{L^P(d\mu)}
    \le C_{n,\alpha}
    \|g\|_{L^Q(d\mu)}
\end{equation}
for every $m\ge1$, with $C_{n,\alpha}$ independent of $m$.
\end{lemma}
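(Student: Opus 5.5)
This is Stein's complex interpolation applied to the analytic family \(z\mapsto T_z^{(m)}\) on the strip \(0\le\Re z\le\beta\), using the two endpoint bounds of Lemma~\ref{lem:classical-endpoints}. We need \(\theta\in(0,1)\) with \(\alpha=\theta\beta\), so \(\theta=\alpha/\beta\); this lies in \((0,1)\) because \(\alpha<2\le n-\tfrac34=\beta\) for \(n\ge3\). The interpolated exponents should then coincide with \(P,Q\) from \eqref{eq:PQ}:
\[
 \frac1{Q_\theta}=\frac{1-\theta}{2}+\frac{\theta}{S},\qquad \frac1{P_\theta}=\frac{1-\theta}{2}+\frac{\theta}{R}.
\]
By \eqref{eq:SR},
\[
 \frac1{Q_\theta}=\frac12+\theta\Bigl(\frac{\beta}{2n}+\frac{\beta}{8n(n-1)}\Bigr)=\frac12+\frac{\alpha}{2n}+\frac{\alpha}{8n(n-1)},
\]
which is exactly \(1/Q\) with \(\eps=\alpha/(8n(n-1))\). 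The identity \(1/P_\theta=1/P\) follows in the same way. This explains the choice \eqref{eq:fixed-parameters}: \(\eps\) is designed precisely so that the interpolation lands on \((Q,P)\).

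To set up the interpolation, fix \(g,\phi\) to be simple functions with compact support in \(\R^n\setminus\{0\}\) (or \(C_c^\infty(\R^n\setminus\{0\})\) functions, by density), normalized by \(\|g\|_{L^Q(d\mu)}=\|\phi\|_{L^{P'}(d\mu)}=1\). Form the usual analytic families
\[
 g_z=|g|^{Q(\frac{1-z/\beta}{2}+\frac{z/\beta}{S})}\operatorname{sgn}g,
\]
and \(\phi_z\) defined analogously with the exponents \(P'\), \(2\) and \(R'\). Set
\[
 \Phi(z)=\int T_z^{(m)}g_z\,\phi_z\,d\mu.
\]
On the line \(\Re z=0\), \eqref{eq:JK-endpoint} gives \(|\Phi|\le C_ne^{c_n|\gamma|}\). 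On the line \(\Re z=\beta\), \eqref{eq:Stein-endpoint} gives the same bound. The three lines lemma in its Stein form, which tolerates growth like \(e^{c|\gamma|}\), then yields \(|\Phi(\alpha)|\le C\). Here \(C\) depends only on \(n\) and \(\theta\), hence only on \(n\) and \(\alpha\), and is independent of \(m\) because the endpoint constants are. Duality and density then give \eqref{eq:Talpha-PQ}.

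The main obstacle is justifying that \(\Phi\) is analytic in the open strip, continuous up to its boundary, and of admissible growth, i.e.\ at most double-exponential, inside the strip. I would argue this using the entirety in \(z\) of \(T_z^{(m)}g(x)\) for fixed \(g\in C_c^\infty(\R^n\setminus\{0\})\) and \(x\neq0\). Since \(g_z\) is a finite combination of indicator functions times entire factors \(e^{z c_k}\), I would first approximate the indicators by \(C_c^\infty(\R^n\setminus\{0\})\) functions, so that \(\Phi\) is a finite sum of terms of the form \(e^{zc}\int T_z^{(m)}\psi\,\phi_z\,d\mu\). Analyticity of each term then follows from local uniform bounds on \(T_z^{(m)}\psi(x)\) for \(x\) in the compact support of \(\phi\), which come from the explicit kernel \eqref{eq:Kz-cited} and the Gamma normalisation, as in \cite[Section~3]{JerisonKenig1985}. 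For the growth condition, note that \(|\Gamma|^{-1}\) and \(1/\Gamma(z/2)\) factors grow at most like \(e^{C|\gamma|\log|\gamma|}\), which is admissible for Stein's theorem. I expect to be able to cite this verification from the same place Jerison--Kenig do rather than redo it.
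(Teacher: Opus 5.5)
Your proposal is correct and is essentially the paper's proof. It is Stein's interpolation for the analytic family $T_z^{(m)}$ between the $L^2(d\mu)$ bound \eqref{eq:JK-endpoint} on $\Re z=0$ and the $L^S\to L^R$ bound \eqref{eq:Stein-endpoint} on $\Re z=\beta$, evaluated at $\theta=\alpha/\beta$, where \eqref{eq:SR} makes the interpolated exponents coincide exactly with \eqref{eq:PQ} and the $m$-independence is inherited from the endpoint constants. The paper simply cites \cite[Theorem~1]{Stein1956} (after rescaling to the strip $0\le\Re w\le1$) without spelling out $g_z,\phi_z$ or the analyticity and admissible-growth checks, so your extra discussion of those points is consistent with, and more careful than, what the paper writes.
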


\begin{proof}
We apply Stein's interpolation theorem for analytic families
\cite[Theorem~1]{Stein1956} to
\[
    w\longmapsto T_{\beta w}^{(m)}
\]
on the strip \(0\le\Re w\le1\).  The boundary estimates are
\eqref{eq:JK-endpoint} and \eqref{eq:Stein-endpoint}; their constants have
the required exponential growth in \(\Im w\), uniformly in \(m\).  At
\(\theta=\alpha/\beta\), the interpolated exponents are
\[
    \frac1Q=\frac{1-\theta}{2}+\frac{\theta}{S},
    \qquad
    \frac1P=\frac{1-\theta}{2}+\frac{\theta}{R}.
\]
Using \eqref{eq:SR} gives exactly \eqref{eq:PQ}.  Since both boundary
constants are independent of \(m\), so is the interpolated constant.
\end{proof}

For compactly supported $F$ define, initially when the integrals converge absolutely,
\begin{equation}\label{eq:PmF}
 P_{m-1}^F(x)
 :=c_{n,\alpha}\sum_{j=0}^{m-1}\frac1{j!}
 \int_{\mathbb R^n}
 \left.\partial_r^j|rx-y|^{\alpha-n}\right|_{r=0}
 F(y)\,dy,
\end{equation}
and set
\begin{equation}\label{eq:RmF}
    R_mF:=I_\alpha F-P_{m-1}^F.
\end{equation}
For each fixed \(j\),
\[
 \left.\partial_r^j|rx-y|^{\alpha-n}\right|_{r=0}
 =
 (x\cdot\nabla)^j\bigl[|z-y|^{\alpha-n}\bigr]_{z=0}.
\]
Hence it is a homogeneous polynomial of degree \(j\) in \(x\). In particular, $P_{m-1}^F$ is a polynomial of degree at most $m-1$ whenever its coefficients are well defined.

\begin{proposition}\label{prop:remainder}
There exists a constant $C_0=C_0(n,\alpha) >0 $ such that, for every $m\ge1$ and every compactly supported $F$ satisfying
\[
    |x|^{-a_m}F\in L^Q(\mathbb R^n),
\]
the polynomial $P_{m-1}^F$ and the remainder $R_mF$ are well defined and
\begin{equation}\label{eq:remainder-est}
    \||x|^{-a_m}R_mF\|_{L^P(\mathbb R^n)}
    \le
    C_0\||x|^{-a_m}F\|_{L^Q(\mathbb R^n)}.
\end{equation}
In particular, $C_0$ is independent of $m$.
\end{proposition}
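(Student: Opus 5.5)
The plan is to identify $|x|^{-a_m}R_mF$ with $T_\alpha^{(m)}$ applied to a suitable weighted version of $F$, and then read off \eqref{eq:remainder-est} from Lemma~\ref{lem:low-order-interpolation} by converting between $dx$ and $d\mu$.

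First, the well-definedness. Since $F$ has compact support, the coefficients of $P_{m-1}^F$ involve
\[
\int (x\cdot\nabla)^j|z-y|^{\alpha-n}\big|_{z=0}F(y)\,dy,
\]
whose kernel is bounded by $C_j|x|^j|y|^{\alpha-n-j}$. H\"older's inequality with the weight gives
\[
\int |y|^{\alpha-n-j}|F|\,dy\le \||y|^{-a_m}F\|_{L^Q}\,\bigl\||y|^{a_m+\alpha-n-j}\chi_{\supp F}\bigr\|_{L^{Q'}}.
\]
The second factor is finite near $0$ provided $(a_m+\alpha-n-j)Q'>-n$. Using $a_m=m-\tfrac12+n/P$, $j\le m-1$ and $\frac1Q-\frac1P=\frac\alpha n$, this reduces to $\tfrac12+\alpha-n+n/P>-n/Q'=-n+n/Q$, that is $\tfrac12+\alpha>n(\frac1Q-\frac1P)=\alpha$, which holds. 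Boundedness of the support handles large $|y|$. The same bound shows that $I_\alpha F$ converges absolutely for a.e.\ $x$, since $F\in L^Q_{\rm loc}$ away from $0$ and is integrable near $0$. So $R_mF$ is well defined.

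Second, the kernel identity. With $c_{n,\alpha}$ the Riesz constant, the prefactor $c_{n,z}/\Gamma((n-z)/2)$ in \eqref{eq:Kz-cited} at $z=\alpha$ equals $\pi^{-n/2}2^{-\alpha}/\Gamma(\alpha/2)$, which is exactly the Riesz normalisation. Put $g(y):=|y|^{n-a_m}F(y)\cdot|y|^{?}$, with the power chosen so that
\[
T_\alpha^{(m)}g(x)=|x|^{-t_m}R_mF(x)\quad\text{for } x\ne 0.
\]
Comparing $|y|^{n+t_m-\alpha}g(y)\,d\mu(y)=F(y)\,dy$ forces $g(y)=|y|^{\alpha-t_m}F(y)$. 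Now convert the norms:
\[
\||x|^{-t_m}R_mF\|_{L^P(d\mu)}=\||x|^{-t_m-n/P}R_mF\|_{L^P(dx)}=\||x|^{-a_m}R_mF\|_{L^P},
\]
\[
\|g\|_{L^Q(d\mu)}=\||x|^{\alpha-t_m-n/Q}F\|_{L^Q}=\||x|^{-a_m}F\|_{L^Q},
\]
the last step using $\alpha-n/Q=-n/P$. So \eqref{eq:remainder-est} is exactly \eqref{eq:Talpha-PQ} applied to $g$, with $C_0=C_{n,\alpha}$, independent of $m$.

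Third, approximation, which I expect to be the main obstacle. Lemma~\ref{lem:low-order-interpolation} was established for $g\in C_c^\infty(\R^n\setminus\{0\})$, and the integral formula at $z=\alpha$ must agree with the operator obtained there. I would take $g_k\in C_c^\infty(\R^n\setminus\{0\})$ with $g_k\to g$ in $L^Q(d\mu)$, for instance by truncating away from $0$ and mollifying. The estimate then makes $T_\alpha^{(m)}g_k$ Cauchy in $L^P(d\mu)$. In parallel, I would show $T_\alpha^{(m)}g_k(x)\to T_\alpha^{(m)}g(x)$ pointwise for $x\ne0$, using the absolute-integrability bounds above. Near $y=0$ this relies on the Taylor remainder bound $|K|\lesssim|x|^{m}|y|^{\alpha-n-m}$ for $|y|\ge 2|x|$, plus the ordinary local Riesz singularity elsewhere; near $x=y$ there is only the local HLS singularity. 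Fatou's lemma then gives the estimate for $g$. Care is needed because the truncation near $0$ interacts with the polynomial subtraction, but the dominated bounds from the first step control every term uniformly in $k$.
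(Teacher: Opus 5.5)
Your proposal is correct and follows essentially the paper's route: the same weight $g=|y|^{\alpha-t_m}F$ turns $R_mF$ into $T_\alpha^{(m)}g$, the estimate comes from Lemma~\ref{lem:low-order-interpolation}, the $j=m-1$ coefficient gets the same H\"older check, and a density argument finishes. The paper identifies the limit through HLS convergence of $I_\alpha F_\nu$ in $L^P$ on annuli, and that is also what supplies your pointwise convergence near $y=x$, though only a.e.\ along a subsequence rather than at every $x\neq0$. One harmless slip: the Riesz constant is $c_{n,\alpha}$, not $c_{n,\alpha}/\Gamma((n-\alpha)/2)$, so in fact $T_\alpha^{(m)}g=\Gamma((n-\alpha)/2)^{-1}|x|^{-t_m}R_mF$ and $C_0=\Gamma((n-\alpha)/2)\,C_{n,\alpha}$, which is still independent of $m$.
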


\begin{proof}
We first consider
\[
    F\in C_c^\infty(\mathbb R^n\setminus\{0\})
\]
and set
\[
    g(y):=|y|^{-t_m+\alpha}F(y).
\]
Substitution in \eqref{eq:Kz-cited} at \(z=\alpha\), using
\(d\mu(y)=|y|^{-n}\,dy\), gives
\[
    T_\alpha^{(m)}g(x)
    =
    \frac1{\Gamma((n-\alpha)/2)}
    |x|^{-t_m}R_mF(x).
\]
Hence Lemma~\ref{lem:low-order-interpolation} yields
\[
 \||x|^{-t_m-n/P}R_mF\|_{L^P}
 \le C_{n,\alpha}
 \||x|^{-t_m+\alpha-n/Q}F\|_{L^Q}.
\]
Since \(\alpha-n/Q=-n/P\) and
\(a_m=t_m+n/P\), this is \eqref{eq:remainder-est} for smooth $F$
supported away from the origin.

It remains to extend the definition of \(P_{m-1}^F\) and the estimate to a
compactly supported \(F\) such that
\[
    G:=|x|^{-a_m}F\in L^Q(\mathbb R^n).
\]
Let \(K\) be a compact set containing \(\supp F\).  For
\(0\le j\le m-1\),
\[
 \left|
 \left.\partial_r^j|rx-y|^{\alpha-n}\right|_{r=0}
 \right|
 \le C_j|x|^j|y|^{\alpha-n-j}.
\]
Thus each coefficient of \(P_{m-1}^F\) is bounded by a constant times
\[
 \int_K |y|^{a_m+\alpha-n-j}|G(y)|\,dy.
\]
The most singular case is \(j=m-1\).  By
\eqref{eq:critical-relations},
\[
    a_m+\alpha-n-(m-1)
    =
    \frac12-\frac n{Q'}.
\]
Therefore
\[
 \int_K |y|^{Q'(a_m+\alpha-n-(m-1))}\,dy
 \lesssim
 \int_0^1 r^{Q'/2-1}\,dr<\infty.
\]
H\"older's inequality shows that every coefficient functional defining
\(P_{m-1}^F\) extends continuously to
\(L^Q(|x|^{-a_mQ}dx)\).  In particular, \(P_{m-1}^F\) is well defined.

We now choose
\[
    F_\nu\in C_c^\infty(\mathbb R^n\setminus\{0\}),
    \qquad \supp F_\nu\subset K',
\]
for one fixed compact set \(K'\), such that
\[
    F_\nu\to F
    \quad\text{in }L^Q(|x|^{-a_mQ}dx).
\]
Since \(K'\) is bounded, this also implies \(F_\nu\to F\) in \(L^Q\).
The Hardy--Littlewood--Sobolev inequality therefore gives
\[
    I_\alpha F_\nu\to I_\alpha F
    \qquad\text{in }L^P.
\]
By the continuity of the coefficient functionals,
\[
    P_{m-1}^{F_\nu}\to P_{m-1}^F
\]
coefficient-wise, and hence in \(L^P\) on every bounded annulus.

On the other hand, the smooth estimate applied to \(F_\nu-F_\mu\) shows
that \(R_mF_\nu\) is Cauchy in
\(L^P(|x|^{-a_mP}dx)\).  Let \(\widetilde R_mF\) denote its limit.  On every
annulus \(A=\{r<|x|<R\}\), the weight \(|x|^{-a_m}\) is comparable to a
constant.  Thus
\[
 R_mF_\nu
 =
 I_\alpha F_\nu-P_{m-1}^{F_\nu}
 \longrightarrow
 I_\alpha F-P_{m-1}^F
 \quad\text{in }L^P(A),
\]
while the weighted convergence gives
\(R_mF_\nu\to\widetilde R_mF\) in \(L^P(A)\).  Hence
\[
    \widetilde R_mF=I_\alpha F-P_{m-1}^F
\]
almost everywhere on every annulus, and thus almost everywhere in
\(\mathbb R^n\).  This identifies the weighted limit with \(R_mF\).
Passing to the limit in the smooth estimate proves
\eqref{eq:remainder-est}.
\end{proof}

\section{Flatness and the Cancellation Lemma}\label{sec:cancellation}

We now use the flatness hypothesis \eqref{eq:flatness} in conjunction with  Carleman estimates to prove a cancellation lemma which leaves us with a representation of $u$ to which the
Carleman estimate can be applied once again in the proof of the main
theorem.

\begin{lemma}\label{lem:weighted-flatness}
Suppose $u\in H^{\alpha/2}(\R^n)$ and
\[
    \int_{B_r}|u|^2\,dx=O(r^N)
\]
for every $N>0$. Then, for every $B>0$,
\[
    |x|^{-B}u\in L^2(B_1),
\]
and, for every $A>0$,
\begin{equation}\label{eq:weighted-P}
    |x|^{-A}u\in L^P(B_1).
\end{equation}
\end{lemma}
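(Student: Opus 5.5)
The first claim will follow from a dyadic decomposition of $B_1$. Write $B_1=\bigcup_{k\ge0}A_k$ with $A_k:=\{2^{-k-1}\le|x|<2^{-k}\}$. On $A_k$ the weight satisfies $|x|^{-2B}\le 2^{2B(k+1)}$. The flatness hypothesis with $N=2B+1$ gives $\int_{A_k}|u|^2\le\int_{B_{2^{-k}}}|u|^2\le C_N2^{-kN}$ for all large $k$, and the finitely many remaining $k$ contribute at most $C\|u\|_{L^2}^2<\infty$. Summing,
\[
    \int_{B_1}|x|^{-2B}|u|^2\,dx\le C+\sum_k C_N 2^{2B(k+1)}2^{-k(2B+1)}<\infty.
\]

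For \eqref{eq:weighted-P} we will interpolate between this $L^2$ information and the Sobolev integrability $u\in L^{p_*}(\R^n)$, which follows from $u\in H^{\alpha/2}(\R^n)$ by the fractional Sobolev embedding. Lemma~\ref{lem:exponents} gives $2<P<p_*$, so we can choose $\theta\in(0,1)$ with
\[
    \frac1P=\frac{\theta}{2}+\frac{1-\theta}{p_*}.
\]
Again working on each annulus $A_k$, Hölder's inequality in the form $\|f\|_{L^P(A_k)}\le\|f\|_{L^2(A_k)}^{\theta}\|f\|_{L^{p_*}(A_k)}^{1-\theta}$ gives
\[
    \|u\|_{L^P(A_k)}\le \|u\|_{L^2(B_{2^{-k}})}^{\theta}\|u\|_{L^{p_*}}^{1-\theta}\le C_N 2^{-kN\theta/2}.
\]
Hence
\[
    \||x|^{-A}u\|_{L^P(A_k)}\le 2^{A(k+1)}C_N2^{-kN\theta/2},
\]
and choosing $N>2A/\theta+1$ makes the sequence summable in $k$. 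The triangle inequality for the $L^P$ norm, applied over the disjoint annuli (or summing $P$-th powers), then yields $|x|^{-A}u\in L^P(B_1)$.

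The only nonroutine point is that $P>2$, so plain $L^2$ flatness does not directly control an $L^P$ norm. This is exactly why the Sobolev embedding $H^{\alpha/2}\subset L^{p_*}$ is needed, together with the strict inequality $P<p_*$ from Lemma~\ref{lem:exponents}, which guarantees $\theta>0$. Since $\theta$ depends only on $n$ and $\alpha$, the loss can be compensated by taking $N$ large, so I do not expect a genuine obstacle here.
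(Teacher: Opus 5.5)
Your proof is correct and follows essentially the same route as the paper: dyadic annuli for the weighted $L^2$ bound, then the embedding $H^{\alpha/2}\hookrightarrow L^{p_*}$ and H\"older interpolation with the same $\theta$. The only difference is bookkeeping. The paper interpolates once on all of $B_1$, putting the whole weight on the $L^2$ factor, $\||x|^{-A}u\|_{L^P(B_1)}\le\||x|^{-A/\theta}u\|_{L^2(B_1)}^{\theta}\|u\|_{L^{p_*}(B_1)}^{1-\theta}$, and then invokes the first claim with $B=A/\theta$; this avoids your second dyadic summation.
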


\begin{proof}
On $A_j=B_{2^{-j}}\setminus B_{2^{-j-1}}$,
\[
 \int_{A_j}|x|^{-2B}|u|^2\,dx
 \le C2^{2Bj}\int_{B_{2^{-j}}}|u|^2\,dx
 \le C_N2^{(2B-N)j}.
\]
Choose $N>2B$ and sum in $j$. The fractional Sobolev embedding
$H^{\alpha/2}(\R^n)\hookrightarrow L^{p_*}(\R^n)$, $p_*=2n/(n-\alpha)$ (see,
e.g., \cite[Theorem~6.5]{DiNezzaPalatucciValdinoci2012}), gives $u\in
L^{p_*}(\R^n)$. Since $2<P<p_*$, choose $\theta\in(0,1)$ with
\[
    \frac1P=\frac\theta2+\frac{1-\theta}{p_*}.
\]
Then
\[
 \||x|^{-A}u\|_{L^P(B_1)}
 \le
 \||x|^{-A/\theta}u\|_{L^2(B_1)}^\theta
 \|u\|_{L^{p_*}(B_1)}^{1-\theta}<\infty.
\]
\end{proof}

Fix \(R>0\) such that \(B_{4R}\Subset\Omega\), choose
\[
    \chi\in C_c^\infty(B_{3R}),
    \qquad
    \chi\equiv1\quad\text{on }B_{2R},
\]
and set
\[
    F:=\chi Vu,
    \qquad
    h:=u-I_\alpha F.
\]

\begin{lemma}\label{lem:local-decomp}
For every \(A>0\),
\begin{equation}\label{eq:F-all-weights}
    |x|^{-A}F\in L^Q(\mathbb R^n).
\end{equation}
Moreover,
\[
    h\in L_s^1(\mathbb R^n),
\]
and
\begin{equation}\label{eq:h-harmonic}
    (-\Delta)^s h=0
    \qquad\text{in }B_{2R}.
\end{equation}
\end{lemma}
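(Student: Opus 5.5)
The proof splits into three steps: the weighted bound on $F$, the tail membership of $h$, and the $s$-harmonicity of $h$ in $B_{2R}$.

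\emph{Weighted bound on $F$.} By \eqref{eq:critical-holder}, Hölder's inequality with exponents $n/\alpha$ and $P$ gives
\[
\||x|^{-A}F\|_{L^Q(\mathbb R^n)}\le \|\chi V\|_{L^{n/\alpha}}\,\||x|^{-A}u\|_{L^P(B_{3R})}.
\]
The first factor is finite because $V\in L^{n/(2s)}_{\mathrm{loc}}(\Omega)$, $\alpha=2s$, and $\supp\chi\subset B_{3R}\Subset\Omega$. For the second factor, Lemma~\ref{lem:weighted-flatness} (with the origin at $x_0$) gives $|x|^{-A}u\in L^P(B_1)$. If $3R>1$, the region $B_{3R}\setminus B_1$ carries a bounded weight, and $u\in L^P_{\mathrm{loc}}$ there by interpolating between $L^2$ and $L^{p_*}$ as in the proof of Lemma~\ref{lem:weighted-flatness}. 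This proves \eqref{eq:F-all-weights}.

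\emph{Tail membership of $h$.} In particular $F\in L^Q(\mathbb R^n)$, since the weight $|x|^{-A}$ is bounded below on the compact support of $F$. By \eqref{eq:HLS}, $I_\alpha F\in L^P(\mathbb R^n)$, and also $u\in L^2(\mathbb R^n)$. Any $L^p$ function with $1\le p<\infty$ lies in $L^1_s$: by Hölder's inequality it suffices that $(1+|x|^{n+2s})^{-1}\in L^{p'}$, which holds because $(n+2s)p'>n$. Hence $h=u-I_\alpha F\in L^1_s(\mathbb R^n)$.

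\emph{$s$-harmonicity of $h$.} By \eqref{eq:Riesz-inverse}, $(-\Delta)^s I_\alpha F=F$ in $\mathcal S'$. For $\varphi\in C_c^\infty(B_{2R})$ the equation gives $\langle(-\Delta)^s u,\varphi\rangle=\langle Vu,\varphi\rangle=\langle F,\varphi\rangle$, since $\chi\equiv1$ on $\supp\varphi$; note $Vu\in L^1_{\mathrm{loc}}$ because $Vu\in L^Q_{\mathrm{loc}}$. Subtracting yields \eqref{eq:h-harmonic}.

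\emph{Main obstacle.} The only subtle point is consistency of the distributional notions. The equation for $u$ is tested against $C_c^\infty(\Omega)$ functions using $\langle u,(-\Delta)^s\varphi\rangle$, whereas \eqref{eq:Riesz-inverse} is an identity in $\mathcal S'$. I would handle this by noting that for $\varphi\in C_c^\infty$ the function $(-\Delta)^s\varphi$ is smooth and decays like $|x|^{-n-2s}$. Therefore the pairing $\langle I_\alpha F,(-\Delta)^s\varphi\rangle$ with $I_\alpha F\in L^P$ is absolutely convergent. It agrees with the $\mathcal S'$ pairing by approximating $F$ by Schwartz functions, using \eqref{eq:HLS} and the fact that $(-\Delta)^s\varphi\in L^{P'}$.
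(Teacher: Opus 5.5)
Your proof is correct and follows the paper's argument essentially step for step. You bound $F$ by H\"older via \eqref{eq:critical-holder} combined with Lemma~\ref{lem:weighted-flatness}, get the tail membership from HLS and $L^p\subset L_s^1$, and obtain \eqref{eq:h-harmonic} by subtracting \eqref{eq:Riesz-inverse} from the equation using $\chi\equiv1$ on $B_{2R}$; your extra care (covering $B_{3R}\setminus B_1$ when $3R>1$, and checking that the $\mathcal S'$ identity agrees with the pairing against $C_c^\infty(B_{2R})$ test functions) tightens two points the paper passes over quickly.
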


\begin{proof}
Fix \(A>0\). Since \(\chi\equiv1\) on \(B_R\), Lemma~\ref{lem:weighted-flatness}
and \eqref{eq:critical-holder} give
\[
 \||x|^{-A}F\|_{L^Q(B_R)}
 \le
 \|V\|_{L^{n/\alpha}(B_R)}
 \||x|^{-A}u\|_{L^P(B_R)}
 <\infty.
\]
On \(B_{3R}\setminus B_R\), one has \(|x|^{-A}\le R^{-A}\). Hence
\[
 \||x|^{-A}F\|_{L^Q(B_{3R}\setminus B_R)}
 \le
 C_{A,R}
 \|V\|_{L^{n/\alpha}(B_{3R})}
 \|u\|_{L^P(B_{3R})}
 <\infty.
\]
Since \(F\) is supported in \(B_{3R}\), this proves
\eqref{eq:F-all-weights}. In particular, \(F\in L^Q(\mathbb R^n)\).
Hence, by \eqref{eq:HLS},
\[
    I_\alpha F\in L^P(\mathbb R^n).
\]
The identity \eqref{eq:Riesz-inverse} gives
\[
    (-\Delta)^s I_\alpha F=F
\]
distributionally. Since \(\chi\equiv1\) on \(B_{2R}\),
\[
    F=Vu=(-\Delta)^s u
    \qquad\text{in }B_{2R},
\]
which proves \eqref{eq:h-harmonic}.

Finally,
\[
    u\in L^2(\mathbb R^n)\subset L_s^1(\mathbb R^n),
    \qquad
    I_\alpha F\in L^P(\mathbb R^n)\subset L_s^1(\mathbb R^n),
\]
and therefore \(h=u-I_\alpha F\in L_s^1(\mathbb R^n)\).
\end{proof}

Using Lemma~\ref{lem:analytic} we now find \(\rho_0\in(0,R)\) and \(H>0\)
such that, for every \(m\ge1\),
\[
    h=T_{m-1}h+h_m
    \quad\text{in }B_{\rho_0},
    \qquad
    |h_m(x)|\le H\rho_0^{-m}|x|^m
    \quad (|x|<\rho_0).
\]

\begin{lemma}[Cancellation Lemma]\label{lem:jet}
With \(F,h,\rho_0,H\) as above, for every \(m\ge1\),
\begin{equation}\label{eq:key-representation}
    u=R_mF+h_m
    \qquad\text{in }B_{\rho_0},
\end{equation}
where \(h_m\) is the analytic remainder in \eqref{eq:h-expansion} and
satisfies \eqref{eq:analytic-rem}.
\end{lemma}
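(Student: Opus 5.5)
Fix \(m\ge1\). On \(B_{\rho_0}\) we have \(u=I_\alpha F+h\), and by the definition \eqref{eq:RmF} (justified by Proposition~\ref{prop:remainder}, since \eqref{eq:F-all-weights} gives \(|x|^{-a_m}F\in L^Q\)) together with \eqref{eq:h-expansion},
\[
    u=\Pi_m+R_mF+h_m\quad\text{in }B_{\rho_0},
    \qquad
    \Pi_m:=P_{m-1}^F+T_{m-1}h .
\]
Here \(\Pi_m\) is a polynomial of degree at most \(m-1\). The claim \eqref{eq:key-representation} is exactly the statement \(\Pi_m\equiv0\). So the plan is to show that every other term in the identity is too small, in a weighted \(L^P\) sense near the origin, to be matched by a nonzero polynomial of degree \(\le m-1\).

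The key steps are as follows.

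\textbf{Step 1 (weighted smallness of the other terms).} Lemma~\ref{lem:weighted-flatness} gives \(|x|^{-a_m}u\in L^P(B_1)\). Proposition~\ref{prop:remainder} gives \(|x|^{-a_m}R_mF\in L^P(\R^n)\). For the analytic remainder, \eqref{eq:analytic-rem} gives \(|x|^{-a_m}|h_m|\lesssim|x|^{m-a_m}=|x|^{1/2-n/P}\). This is in \(L^P(B_{\rho_0})\) because \(P(1/2-n/P)=P/2-n>-n\). Hence
\[
    |x|^{-a_m}\Pi_m\in L^P(B_{\rho_0}).
\]

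\textbf{Step 2 (polynomials are not in the weighted space).} Suppose \(\Pi_m\not\equiv0\). Let \(p_k\) be its lowest-order nonzero homogeneous part, with \(k\le m-1\). On \(B_\delta\) with \(\delta\) small, \(|\Pi_m(x)|\ge\tfrac12|p_k(x)|\) on a conical set of directions where \(|p_k(\omega)|\ge c>0\); this cone has positive measure because \(p_k\not\equiv0\) on the sphere. So in polar coordinates
\[
    \int_{B_\delta}|x|^{-a_mP}|\Pi_m|^P\,dx\gtrsim\int_0^\delta r^{(k-a_m)P+n-1}\,dr .
\]
This diverges iff \((k-a_m)P+n\le0\), i.e.\ iff \(k\le a_m-n/P=m-\tfrac12\). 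That holds since \(k\le m-1\). This contradicts Step 1, so \(\Pi_m\equiv0\) and \eqref{eq:key-representation} follows.

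The main obstacle is bookkeeping rather than analysis. One must check that the identity \(I_\alpha F=P^F_{m-1}+R_mF\) holds almost everywhere with the extended definitions of Proposition~\ref{prop:remainder}; the proposition's proof identifies the weighted limit with \(I_\alpha F-P_{m-1}^F\) a.e. One must also check that \(h\) coincides a.e.\ with its analytic representative on \(B_{\rho_0}\). The choice \(t_m=m-\tfrac12\) is exactly what places the threshold strictly between \(m-1\) and \(m\): polynomial parts of degree \(\le m-1\) are excluded, while the analytic remainder of order \(m\) is admissible.
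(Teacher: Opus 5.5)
Your proof is correct and follows the paper's argument in substance. You use the same decomposition $u=\Pi_m+R_mF+h_m$ on $B_{\rho_0}$, and the same fact that the exponent $a_m-n/P=m-\tfrac12$ separates polynomials of degree $\le m-1$ from the remainder terms. The only difference is how you measure size: you show that a nonzero $\Pi_m$ cannot lie in $L^P(|x|^{-a_mP}dx)$ near the origin, using Lemma~\ref{lem:weighted-flatness} for $u$, whereas the paper compares $L^2(B_\rho)$ norms (the polynomial is bounded below by $\rho^{k+n/2}$, $u$ is $o(\rho^{k+n/2})$ by raw flatness, and H\"older turns the weighted bound on $R_mF$ into $O(\rho^{m-\frac12+\frac n2})$).
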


\begin{proof}
By \eqref{eq:F-all-weights} and Proposition~\ref{prop:remainder},
\[
    I_\alpha F=P_{m-1}^F+R_mF.
\]
By \eqref{eq:h-expansion},
\[
    h=T_{m-1}h+h_m
    \qquad\text{in }B_{\rho_0}.
\]
Since \(u=I_\alpha F+h\), we obtain
\begin{equation}\label{eq:decomp-Q}
    u
    =
    Q_{m-1}+R_mF+h_m
    \qquad\text{in }B_{\rho_0},
\end{equation}
where
\[
    Q_{m-1}:=P_{m-1}^F+T_{m-1}h.
\]
Both summands are polynomials of degree at most \(m-1\), and hence so is
\(Q_{m-1}\).

We claim that \(Q_{m-1}\equiv0\). Suppose otherwise, and let
\(k\le m-1\) be the lowest degree of a nonzero homogeneous component of
\(Q_{m-1}\). Then there are \(c_Q>0\) and \(\rho_1\in(0,\rho_0)\) such that
\begin{equation}\label{eq:poly-lower}
    \|Q_{m-1}\|_{L^2(B_\rho)}
    \ge c_Q\rho^{k+n/2}
    \qquad (0<\rho<\rho_1).
\end{equation}

Since \(P>2\), choose \(\ell\in(1,\infty)\) such that
\[
    \frac1\ell=\frac12-\frac1P.
\]
Using Proposition~\ref{prop:remainder}, for \(0<\rho<\rho_1\),
\[
\begin{aligned}
 \|R_mF\|_{L^2(B_\rho)}
 &\le
 \||x|^{-a_m}R_mF\|_{L^P(\mathbb R^n)}
 \||x|^{a_m}\|_{L^\ell(B_\rho)}\\
 &\le
 C_{\mathrm{rem}}
 \||x|^{-a_m}F\|_{L^Q(\mathbb R^n)}
 \||x|^{a_m}\|_{L^\ell(B_\rho)}\\
 &\le C_m\rho^{a_m+n/\ell}
 =
 C_m\rho^{m-\frac12+\frac n2}.
\end{aligned}
\]
Here we used
\[
    a_m+\frac n\ell
    =
    \left(m-\frac12+\frac nP\right)
    +\left(\frac n2-\frac nP\right)
    =
    m-\frac12+\frac n2.
\]
Moreover, \eqref{eq:analytic-rem} gives
\[
    \|h_m\|_{L^2(B_\rho)}
    \le C_m'\rho^{m+n/2}.
\]
Since \(k\le m-1\), both exponents
\[
    m-\frac12+\frac n2
    \qquad\text{and}\qquad
    m+\frac n2
\]
are strictly larger than \(k+n/2\). Infinite-order vanishing also gives
\[
    \|u\|_{L^2(B_\rho)}
    =
    o(\rho^{k+n/2})
    \qquad\text{as }\rho\downarrow0.
\]
Therefore, by \eqref{eq:decomp-Q},
\[
 \|Q_{m-1}\|_{L^2(B_\rho)}
 \le
 \|u\|_{L^2(B_\rho)}
 +\|R_mF\|_{L^2(B_\rho)}
 +\|h_m\|_{L^2(B_\rho)}
 =
 o(\rho^{k+n/2}),
\]
which contradicts \eqref{eq:poly-lower}. Hence \(Q_{m-1}\equiv0\), and
\eqref{eq:key-representation} follows.
\end{proof}

\section{Proof of the Main Theorem}\label{sec:mainproof}

\begin{proof}[Proof of Theorem~\ref{thm:intro-main}]
By a translation, we may assume without loss of generality that $x_0=0$. We choose
\[
    0<\rho<\min\{\rho_0,1\}
\]
so small that
\[
    B_\rho\subset B_{2R}
\]
and
\begin{equation}\label{eq:critical-smallness}
    C_0
    \|V\|_{L^{n/\alpha}(B_\rho)}
    \le\frac12,
\end{equation}
where $C_0$ is the constant in
Proposition~\ref{prop:remainder}.

For each $m\ge1$ we set
\[
    X_m
    :=
    \||x|^{-a_m}u\|_{L^P(B_\rho)}.
\]
which is finite due to
Lemma~\ref{lem:weighted-flatness}. From
\eqref{eq:key-representation},
Proposition~\ref{prop:remainder}, and
\eqref{eq:analytic-rem},
\begin{equation}\label{eq:X-start}
 X_m
 \le
 C_0
 \||x|^{-a_m}F\|_{L^Q(\R^n)}
 +
 C_HH\rho_0^{-m}\rho^{1/2},
\end{equation}
where the second term follows from
\[
    m-a_m=\frac12-\frac nP
\]
and
\[
 \||x|^{-a_m}h_m\|_{L^P(B_\rho)}^P
 \le
 H^P\rho_0^{-mP}\omega_n
 \int_0^\rho r^{P/2-1}\,dr.
\]

Now on $B_\rho$,
\[
    F=Vu,
\]
so by \eqref{eq:critical-holder} we have
\begin{equation}\label{eq:critical-product-proof}
 \||x|^{-a_m}Vu\|_{L^Q(B_\rho)}
 \le
 \|V\|_{L^{n/\alpha}(B_\rho)}
 X_m,
\end{equation}
while on $\R^n\setminus B_\rho$ we get
\begin{equation}\label{eq:outer}
 \||x|^{-a_m}F\|_{L^Q(\R^n\setminus B_\rho)}
 \le
 \rho^{-a_m}
 \|F\|_{L^Q}.
\end{equation}
Combining \eqref{eq:X-start}--\eqref{eq:outer} and using
\eqref{eq:critical-smallness} we get
\begin{equation}\label{eq:X-absorbed}
 X_m
 \le
 2C_0
 \rho^{-a_m}
 \|F\|_{L^Q}
 +
 2C_HH
 \rho_0^{-m}
 \rho^{1/2}.
\end{equation}

On $B_{\rho/2}$ we have
\[
    |x|^{a_m}
    \le
    (\rho/2)^{a_m}
\]
and since
\[
    a_m=m-\frac12+\frac nP,
\]
\eqref{eq:X-absorbed} yields
\begin{equation}\label{eq:geometric-final}
 \|u\|_{L^P(B_{\rho/2})}
 \le
 C_1
 2^{-m}
 \|F\|_{L^Q}
 +
 C_2H
 \rho^{n/P}
 \left(\frac{\rho}{2\rho_0}\right)^m,
\end{equation}
where $C_1,C_2$ are independent of $m$. We now let $m\to\infty$ to get
\[
    u=0
    \qquad\text{a.e. in }B_{\rho/2}.
\]
The equation implies
\[
    (-\Delta)^su=0
\]
in the same ball. Lemma~\ref{lem:antilocality} therefore yields
\[
    u\equiv0
    \qquad\text{in }\R^n.
\]
\end{proof}

\end{document}